\renewcommand {\a}{ \alpha }
\renewcommand{\b}{\beta}
\newcommand{\y}{\eta}
\newcommand{\e}{\epsilon}
\renewcommand{\d}{\delta}
\newcommand{\D}{\Delta}
\newcommand{\s}{\sigma}
\renewcommand{\l}{\lambda}
\newcommand{\p}{\partial}
\newcommand{\Om}{\Omega}
\newcommand{\R}{ \mathbb R}
\newcommand{\N}{ \mathbb N}
\newcommand{\CL}{\mathcal L}
\newcommand{\CH}{\mathcal H}
\newcommand {\gd}{\mathfrak d}
\newcommand {\ba}{\mathbf a}
\newcommand {\bm}{\mathbf m}
\newcommand {\bb}{\mathbf b}
\newcommand {\bq}{\mathbf q}
\newcommand {\bz}{\mathbf z}
\newcommand {\BA}{\mathbf A}
\newcommand {\BB}{\mathbf B}
\newcommand {\BH}{\mathbf H}
\newcommand {\BI}{\mathbf I}
\newcommand {\BJ}{\mathbf J}
\newcommand {\BK}{\mathbf K}
\newcommand {\BQ}{\mathbf Q}
\newcommand {\BP}{\mathbf P}
\newcommand {\BS}{\mathbf S}
\newcommand {\BT}{\mathbf T}
\newcommand {\BZ}{\mathbf Z}
\newcommand{\wt}{\widetilde}
\newcommand{\wh}{\widehat}
\DeclareMathOperator{\res}{\restriction}
\DeclareMathOperator{\rank}{rank}
\newtheorem{thm}{Theorem}[section]
\newtheorem{cor}[thm]{Corollary}
\newtheorem{lem}[thm]{Lemma}
\newtheorem{prop}[thm]{Proposition}
\theoremstyle{definition}
\theoremstyle{remark}
\numberwithin{equation}{section}
\newcommand{\thmref}[1]{Theorem~\ref{#1}}
\newcommand{\bsymb}{\boldsymbol}
\begin{document}
\centerline{\bf ON THE SPECTRUM OF THE DIRICHLET LAPLACIAN}
\centerline{\bf IN  A NARROW STRIP,} 
\vskip0.3cm
\centerline{by Leonid Friedlander and Michael Solomyak}

\vskip0.3cm

\section{Introduction}\label{prel}

There are several reasons why the study of the spectrum of the Laplacian
in a narrow neighborhood of an embedded graph is interesting.
The graph can be embedded into a Euclidean space or it can be embedded
into a manifold. In his pioneering work \cite{CV}, Colin de Verdi\`ere
used Riemannian metrics concentrated in a small neighborhood of a graph
to prove that for every manifold $M$ of dimension greater than two and for
every positive number $N$ there exists a Riemannian metric $g$ such that
the multiplicity of the smallest positive eigenvalue of the Laplacian
on $(M,g)$ equals $N$. Recent interest to the problem is, in particular,
motivated by possible applications to mesoscopic systems. Rubinstein
and Schatzman studied in \cite{RS} eigenvalues of the Neumann Laplacian
in a narrow strip surrounding an embedded planar graph. The strip has
constant width $\epsilon$ everywhere except neighborhoods of
vertices. Under some assumptions on the structure of the strip
near vertices, they proved that eigenvalues of the Neumann Laplacian
converge to eigenvalues of the Laplacian on the graph. Kuchment and
Zheng extended in \cite{KZ} these results to the case when the strip
width is not constant. 

The Dirichlet boundary condition turns out to be
more complicated than the Neumann condition. Eigenvalues of a domain
of width $\epsilon$ are bounded from below by $\pi^2/\epsilon^2$.
Post studied in \cite{Po} eigenvalues $\lambda_j(\epsilon)$ of the Dirichlet 
Laplacian
in a neighborhood of a planar graph that has constant width $\epsilon$
near the edges and that narrows down toward the vertices.
He proved that $\lambda_j(\epsilon)-\pi^2/\epsilon^2$ converge to
the eigenvalues of the direct sum of certain Schr\"odinger operators on the 
edges with the Dirichlet boundary conditions. We show that this result can
not be extended to neighborhoods of variable width. If the width is not 
constant then the spectrum of the Dirichlet Laplacian is basically determined
by the points where it is the widest. In the paper, we treat a simple
model case: the graph is a straight segment, and the strip is the widest
in one cross-section. In this case, we derive a two-term asymptotics
for $\lambda_j(\epsilon)$.

We will formulate now main results of the paper.
Let $h(x)>0$ be a continuous function defined on a segment $I=[-a,b]$,
where $a,b>0$. We assume that

({\it i}) $x=0$ is the only point of global maximum of $h(x)$ on
$I$;

({\it ii}) The function $h(x)$ is $C^1$ on $I\setminus\{0\}$, and in a 
neighborhood of $x=0$
it admits an expansion

\begin{equation}\label{1:1}
h(x)=\begin{cases} M-c_+x^m+O\bigl(x^{m+1}\bigr),\qquad & x>0,\\
M-c_-|x|^m+O\bigl(|x|^{m+1}\bigr),\qquad & x<0\end{cases}
\end{equation}
where $M,m,c_\pm$ are real numbers and $M, c_\pm>0,\ m\ge 1$. 

If $h(x)$ is $C^\infty$ on the whole of $I$, then necessarily $m$ is even
and $c_-=c_+$.
Another interesting case is $m=1$ (profile of a `broken line'). 
\vskip0.2cm

For a positive $\e$, let
\[\Om_\epsilon=\{(x,y): x\in I,\ 0< y<\e h(x)\}.\]
Below $\D_\e$ stands for the (positive) Dirichlet Laplacian in
$\Om_\e$ and $\l_j(\e)$ for its eigenvalues. Our main goal in this
paper is to find the asymptotics of $\l_j(\e)$ as $\e\to 0$.

In theorems \ref{1:t0} -- \ref{1:t1} below the conditions ({\it i})
and ({\it ii}) are supposed to be satisfied.

\begin{thm}\label{1:t0}
Let $\a=2(m+2)^{-1}$. Then the limits
\begin{equation}\label{1:s1}
    \mu_j=\lim_{\e\to 0}\e^{2\a}\biggl(\l_j(\e)-\frac{\pi^2}
{M^2\e^2}\biggr)
\end{equation}
 exist, and $\mu_j$ are eigenvalues of the operator on $L^2(\R)$
 given by
\begin{equation}\label{1:s2}
    \BH=-\frac{d^2}{dx^2}+q(x) ,\qquad q(x)=
\begin{cases}
2\pi^2 M^{-3} c_+ x^m,\ x>0,\\
2\pi^2 M^{-3} c_- |x|^m,\ x<0.\end{cases}
    \end{equation}
\end{thm}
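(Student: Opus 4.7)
The plan is to prove matching two-sided bounds on $\l_j(\e)$ via the min--max principle combined with a separation-of-variables ansatz, and then identify the rescaled limit with the spectrum of $\BH$. The key heuristic is that eigenfunctions of $\D_\e$ at the bottom of the spectrum factor, to leading order, as $u(x,y)\approx a(x)\chi_\e(x,y)$, where $\chi_\e(x,y)=\sqrt{2/(\e h(x))}\sin(\pi y/(\e h(x)))$ is the ground state of the transverse Dirichlet problem on $(0,\e h(x))$ with eigenvalue $\pi^2/(\e h(x))^2$. Differentiating the identity $\int_0^{\e h(x)}\chi_\e^2\,dy\equiv 1$ in $x$ (the boundary term drops out by the Dirichlet condition) shows $\int_0^{\e h(x)}\chi_\e\p_x\chi_\e\,dy=0$, so substituting the ansatz into the Dirichlet integral yields the effective one-dimensional form
\[
\CE_\e(a):=\int_I\Bigl(|a'(x)|^2+\frac{\pi^2}{(\e h(x))^2}|a(x)|^2\Bigr)dx + R_\e(a),
\]
with remainder $R_\e(a)=\int|a|^2\!\int_0^{\e h}|\p_x\chi_\e|^2 dy\,dx=O(\|a\|^2)$ uniformly in $\e$ (the inner integral is a bounded function of $h,h'$). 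Using \eqref{1:1},
\[
\frac{\pi^2}{(\e h(x))^2}=\frac{\pi^2}{M^2\e^2}+\e^{-2}q(x)+O(\e^{-2}|x|^{m+1});
\]
balancing $|a'|^2$ against $\e^{-2}|x|^m$ forces the rescaling $x=\e^\a t$ with $\a=2/(m+2)$, and with $\hat a(t)=\e^{\a/2}a(\e^\a t)$ the quantity $\e^{2\a}[\CE_\e(a)-(\pi^2/M^2\e^2)\|a\|^2]$ becomes the quadratic form of $\BH$ evaluated on $\hat a$, plus $o(1)$.

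For the \emph{upper bound}, take the first $j$ normalized eigenfunctions $\varf_1,\dots,\varf_j\in L^2(\R)$ of $\BH$; since $q\to\infty$ these decay super-polynomially. Fix $\zeta\in C_c^\infty(I)$ with $\zeta\equiv 1$ near $0$ and set
\[
u_{k,\e}(x,y)=\e^{-\a/2}\zeta(x)\varf_k(\e^{-\a}x)\chi_\e(x,y)\in H_0^1(\Om_\e),\qquad k=1,\dots,j.
\]
A direct computation of the Rayleigh quotient, using the vanishing of the mixed $y$-term and the rescaling $x=\e^\a t$, yields $\pi^2/(M^2\e^2)+\e^{-2\a}(\mu_k+o(1))$; the $\{u_{k,\e}\}$ are asymptotically $L^2$-orthonormal (the $\varf_k$ are, and $\chi_\e(x,\cdot)$ is pointwise normalized). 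Min--max then gives $\l_j(\e)\le \pi^2/(M^2\e^2)+\e^{-2\a}(\mu_j+o(1))$.

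For the \emph{lower bound}, expand any $u\in H_0^1(\Om_\e)$ in the transverse Dirichlet basis $\chi_{k,\e}(x,y)=\sqrt{2/(\e h(x))}\sin(k\pi y/(\e h(x)))$, writing $u=\sum_{k\ge 1}a_k(x)\chi_{k,\e}$. Then $\int|\p_y u|^2=\sum_k\int(k\pi/(\e h(x)))^2|a_k|^2 dx$, so modes with $k\ge 2$ contribute transverse energy at least $4\pi^2/(M^2\e^2)$, which exceeds $\pi^2/(M^2\e^2)+\e^{-2\a}\mu_j$ for small $\e$; a spectral projection argument reduces the low-lying spectrum, modulo $o(\e^{-2\a})$, to the first mode. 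On that mode the form equals $\int(|a_1'|^2+\pi^2(\e h)^{-2}|a_1|^2)dx+R_\e(a_1)$, and a barrier argument — outside any fixed neighbourhood of $0$, $1/h^2-1/M^2\ge c>0$, so the potential $(\pi^2/\e^2)(1/h^2-1/M^2)$ dominates $\e^{-2\a}$ — confines $a_1$ to the inner region $|x|\lesssim \e^\a$. After rescaling $x=\e^\a t$ and using the Taylor expansion of $h$, the first-mode form (minus $(\pi^2/M^2\e^2)\|a_1\|^2$, times $\e^{2\a}$) becomes that of $\BH$ modulo $o(1)$, and min--max delivers $\l_j(\e)\ge \pi^2/(M^2\e^2)+\e^{-2\a}(\mu_j-o(1))$.

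The \emph{main obstacle} is controlling the coupling between transverse modes produced by the $x$-dependence of $\chi_{k,\e}$. The mixing coefficients $\int_0^{\e h}\chi_{k,\e}\p_x\chi_{l,\e}\,dy$ involve $h'/h$, which is only bounded and, when $m=1$, jumps at $x=0$; one must show these cross contributions are $o(\e^{-2\a})$ on the subspaces of interest, which is precisely what legitimizes the spectral projection onto the first mode used in the lower bound. The weak regularity of $h$ at $0$ causes no trouble, since all estimates are integral and use only the expansion \eqref{1:1} together with the strict global-maximum property of $h$ away from $0$.
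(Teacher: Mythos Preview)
Your strategy and the paper's share the same backbone: project onto the first transverse mode $\chi_\e$ and its orthogonal complement, reduce to a one-dimensional Schr\"odinger operator, then rescale. The packaging differs. You run separate upper and lower min--max bounds with explicit test functions; the paper instead proves a single operator-norm resolvent estimate (Theorem~\ref{1:t2}),
\[
\Bigl\|\Bigl(\D_\e-\frac{\pi^2}{M^2\e^2}\BI_\e\Bigr)^{-1}-\BQ_\e^{-1}\oplus\bsymb{0}\Bigr\|\le R_0\e^{3\a},
\]
together with $\|\wh\BQ_\e^{-1}\oplus\bsymb0-\BH^{-1}\|\to0$ (Theorem~\ref{1:t1}), and reads off \eqref{1:s1} from $|\l_j^{-1}(\BA)-\l_j^{-1}(\BB)|\le\|\BA^{-1}-\BB^{-1}\|$. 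The resolvent route handles both inequalities at once and also yields a rate and the eigenfunction statements \eqref{1:d}, \eqref{1:e}; your route is more elementary but, even if completed, gives only the limit.

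Your upper bound is fine. The lower bound has a real gap at exactly the point you flag. The sentence ``a spectral projection argument reduces the low-lying spectrum, modulo $o(\e^{-2\a})$, to the first mode'' \emph{is} the heart of the theorem, and your target formulation---cross contributions $o(\e^{-2\a})$---is not what one can prove: the mixing coefficients $\int\chi_{k,\e}\p_x\chi_{l,\e}\,dy$ are $O(h'/h)=O(1)$, so the off-diagonal piece $\bm_\e$ of the form is not small in absolute size. What is needed is a \emph{relative} bound. The paper obtains (see \eqref{2:3})
\[
|\bm_\e[\psi]|\le C\e^{\a}\,\bb_\e[\psi],
\]
with $\bb_\e$ the decoupled block-diagonal form. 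Two ingredients combine to produce the factor $\e^\a$: the transverse gap $\|U\|^2\le C\e^2\bb_\e[U]$ for $U\in\CL_\e^\perp$ (which you do use), \emph{and} the one-dimensional lower bound $\|\chi\|^2\le C\e^{2\a}\bq_\e[\chi]$ of Lemma~\ref{1:ld}, which is where the exponent $\a$ and the growth of the effective potential $W_\e$ enter. Your sketch invokes only the first; without the second the Cauchy--Schwarz estimate on $\bm_\e$ does not close, and the ``barrier argument'' you mention for confining $a_1$ is precisely a weak form of Lemma~\ref{1:ld} that still has to be fed back into the coupling estimate. Once \eqref{2:3} is in hand, your min--max argument and the paper's resolvent argument become equivalent.
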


Note that if $m=2$ and $c_+=c_-$, the operator $\BH$ turns into the
harmonic oscillator.

\vskip0.2cm

Our second goal is to show that the eigenvalue
convergence, described by \eqref{1:s1}, can be obtained as a
consequence of a sort of uniform convergence (i.e., convergence in
norm) of the family of operators
$\left(\D_\e-\frac{\pi^2}{M^2\e^2}\right)^{-1}$. The
usual notion of uniform convergence does not make sense here, since for
different values of $\e$ the operators act in different spaces;
one needs to interpret it in an appropriate way.

\vskip0.2cm

In $L^2(\Om_\e)$ consider the subspace $\CL_\epsilon$ that
consists of functions
\begin{equation*}%\label{1:2}
\psi(x,y)=\psi_\chi(x,y)=\chi(x)\sqrt{\frac2{\e h(x)}}\,
\sin\frac{\pi y}{\epsilon h(x)};
\end{equation*}
then
\begin{equation*}%\label{1:3}
\|\psi_\chi\|^2_{L^2(\Om_\e)}=\int_I\chi^2(x)dx.
\end{equation*}
The mapping $\chi\mapsto\psi_\chi$ is an isometric isomorphism
between $\CL_\epsilon$ and $L^2(I)$. With some abuse of notations,
we will identify operators acting in $\CL_\epsilon$ with operators acting
in $L^2(I)$. 
Obviously, $\psi_\chi\in H^{1,0}(\Om_\e)$ if $\chi\in H^{1,0}(I)$. 
A direct (though, rather lengthy) computation
shows that
\begin{equation*}%\label{1:4y}
    \int_{\Om_\e}|\nabla\psi_\chi|^2dxdy=\int_I\chi'(x)^2dx+
\int_I\left(\frac{\pi^2}{\e^2 h^2(x)}+ v(x)\right)\chi^2(x)dx,
\end{equation*}
where
\begin{equation*}%\label{1:4x}
v(x)=\biggl(\frac{\pi^2}3+\frac14\biggr)\frac{h'(x)^2}{h^2(x)}.
\end{equation*}
 Subtracting from $\int_{\Om_\e}|\nabla\psi_\chi|^2dxdy$
the lower bound of the resulting potential, we obtain the
quadratic form (defined on $H^{1,0}(I)$)
\begin{equation}
 \bq_\e[\chi]:=\int_I\left(\chi'(x)^2+W_\e(x)\chi^2(x)\right)dx,\label{1:4a}
\end{equation}
where
\begin{equation}
 W_\e(x)=\frac{\pi^2}{\e^2}\left(\frac1{h^{2}(x)}-\frac1{M^{2}}\right)
+v(x).\label{1:4g}
\end{equation}
Since the potential $W_\e(x)$ is non-negative, and it is positive
for non-zero values of $x$, the
quadratic form \eqref{1:4a} is positive definite in $L^2(I)$. The
self-adjoint operator on $L^2(I)$, associated with $\bq_\e$, is
given by
\begin{equation}\label{1:4c}
 \BQ_\e u=-\frac{d^2u}{dx^2}+W_\e(x)u,\qquad
 u(-a)=u(b)=0.
\end{equation}

 \vskip0.2cm

The result of theorem \ref{1:t2} below can be interpreted as
two-term asymptotics, in a certain sense, of the operator-valued function
$\D_\e$ as $\e\to 0$. In its formulation, $\BI_\e$ stands for the
identity operator on $L^2(\Om_\e)$.
\begin{thm}\label{1:t2}
There exist numbers $R_0>0$ and $\e_0>0$, depending on the
function $h$ and such that
\begin{equation}\label{1:10}
\left\|\left(\D_\e-\frac{\pi^2}{M^2\e^2}\BI_\e\right)^{-1}-
\BQ_\e^{-1}\oplus\bsymb{0}\right\| \le R_0\e^{3\a},\qquad
\forall\e\in(0,\e_0).
\end{equation}
Here $\bsymb{0}$ is the zero operator on the subspace
$\CL_\e^\perp \subset L^2(\Om_\e)$.
\end{thm}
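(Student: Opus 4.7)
The plan is to view $A_\e := \D_\e-(\pi^2/M^2\e^2)\BI_\e$ as a form perturbation, relative to the splitting $L^2(\Om_\e)=\CL_\e\oplus\CL_\e^\perp$, of the block-diagonal operator $T_\e:=\BQ_\e\oplus A_\e^\perp$, where $A_\e^\perp$ is the self-adjoint operator on $\CL_\e^\perp$ associated with $\mathbf a_\e$ restricted to $\CL_\e^\perp\cap H^{1,0}(\Om_\e)$. By the identity for $\int|\nabla\psi_\chi|^2\,dx\,dy$ already displayed, the $(1,1)$-block is precisely $\BQ_\e$. For the $(2,2)$-block I would use that any $u_2\in\CL_\e^\perp\cap H^{1,0}(\Om_\e)$ has each slice $u_2(x,\cdot)$ orthogonal to $\sin(\pi y/(\e h(x)))$, so the cross-sectional expansion starts at $n=2$ and yields $\int|\partial_y u_2|^2\ge 4\pi^2/(M^2\e^2)\|u_2\|^2$, hence the spectral-gap bound $\mathbf a_\e[u_2]\ge 3\pi^2/(M^2\e^2)\|u_2\|^2$. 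Consequently $\|(A_\e^\perp)^{-1}\|=O(\e^2)$, and a rescaling $x=\e^\a t$ in $\bq_\e$ (using the expansion \eqref{1:1}) gives $\|\BQ_\e^{-1}\|=O(\e^{2\a})$, so $\|T_\e^{-1}\|=O(\e^{2\a})$ and $\|T_\e^{-1}-\BQ_\e^{-1}\oplus\mathbf 0\|=\|(A_\e^\perp)^{-1}\|=O(\e^2)=O(\e^{3\a})$ (since $3\a\le 2$ for $m\ge 1$).

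The core task is to estimate the off-diagonal cross form $\mathbf a_\e(\psi_\eta,u_2)$ with $\psi_\eta\in\CL_\e$ and $u_2\in\CL_\e^\perp$. I would straighten the strip via $s=y/(\e h(x))$ and rescale by $\Psi:=\sqrt{\e h}\,\tilde\psi$, producing an $L^2$-isometry onto $L^2(I\times(0,1))$ under which $\CL_\e$ is the $\sqrt 2\sin(\pi s)$-subspace and
\[
\mathbf a_\e[\psi]=\int|L\Psi|^2\,ds\,dx+\int\frac{|\partial_s\Psi|^2}{\e^2 h^2}\,ds\,dx-\frac{\pi^2}{M^2\e^2}\|\Psi\|^2,\qquad L:=\partial_x-\frac{h'}{2h}-\frac{h's}{h}\partial_s.
\]
The last two terms contribute no $\CL_\e$--$\CL_\e^\perp$ coupling (orthogonality of distinct $\cos(n\pi s)$'s), so $\mathbf a_\e(\psi_\eta,u_2)=\int L\Psi_\eta\,\overline{L\Psi_{u_2}}\,ds\,dx$. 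The crucial computation is that the $\sqrt 2\sin(\pi s)$-Fourier coefficient of $L\Psi_\eta$ is exactly $\eta'(x)$: the $-h'/(2h)$ piece of $L$ is engineered to cancel the first-mode contribution of the $-(h's/h)\partial_s$ piece. Writing $L\Psi_\eta=\sqrt 2\eta'\sin(\pi s)+g$ and $L\Psi_{u_2}=\sqrt 2 r\sin(\pi s)+G$ with $g,G$ orthogonal to $\sin(\pi s)$, direct computation gives $r(x)=2(h'/h)\sum_{n\ge 2}\frac{n(-1)^n}{n^2-1}\chi_n(x)$, where $\chi_n$ are the $s$-Fourier components of $\Psi_{u_2}$. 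Two applications of Cauchy--Schwarz then bound $|\int\eta' r\,dx|\le\|\eta'\|\|r\|\le C\e\sqrt{\bq_\e[\eta]\mathbf a_\e[u_2]}$ (using $\|r\|^2\le C\|u_2\|^2\le C\e^2\mathbf a_\e[u_2]$ via the spectral gap), and $|\int g\bar G\,ds\,dx|\le\|g\|\|G\|\le C\e^\a\sqrt{\bq_\e[\eta]\mathbf a_\e[u_2]}$ (using $\|g\|^2=\int v\eta^2\le C\|\eta\|^2\le C\e^{2\a}\bq_\e[\eta]$ and $\|G\|^2\le\mathbf a_\e[u_2]$). Combining and using $\e\le\e^\a$ yields
\[
|\mathbf a_\e(\psi_\eta,u_2)|\le C\e^\a\sqrt{\bq_\e[\eta]\,\mathbf a_\e[u_2]}.
\]

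By Young's inequality this upgrades to the two-sided form equivalence $(1-C\e^\a)\mathbf t_\e\le\mathbf a_\e\le(1+C\e^\a)\mathbf t_\e$ on the common form domain, where $\mathbf t_\e[\psi]:=\bq_\e[\chi]+\mathbf a_\e[u_2]$. Hence $A_\e=T_\e^{1/2}(I+E_\e)T_\e^{1/2}$ with $\|E_\e\|\le C\e^\a<1$ for small $\e$, so $\|A_\e^{-1}-T_\e^{-1}\|\le\|T_\e^{-1}\|\|E_\e\|/(1-\|E_\e\|)=O(\e^{2\a}\cdot\e^\a)=O(\e^{3\a})$. Combined with the previous $O(\e^{3\a})$ bound on $T_\e^{-1}-\BQ_\e^{-1}\oplus\mathbf 0$, the triangle inequality yields \eqref{1:10}. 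The main obstacle is the cross-form estimate of the preceding paragraph: verifying the exact cancellation that identifies $\eta'(x)$ as the first-mode Fourier coefficient of $L\Psi_\eta$, and harnessing two different sources of smallness---the $\e$ from the spectral gap on $\CL_\e^\perp$ and the $\e^\a$ from $\|\BQ_\e^{-1}\|$---so that a single uniform rate $\e^\a$ emerges in the cross-form bound.
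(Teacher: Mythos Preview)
Your proof is correct and follows essentially the same architecture as the paper's: the same block decomposition $A_\e$ versus the diagonal $T_\e=\BQ_\e\oplus A_\e^\perp$ (the paper's $\BB_\e$), the same spectral-gap bound $\|(A_\e^\perp)^{-1}\|=O(\e^2)$, the same use of Lemma~\ref{1:ld} for $\|\BQ_\e^{-1}\|=O(\e^{2\a})$, the same cross-form estimate $|\ba_\e(\psi_\chi,U)|\le C\e^\a(\bq_\e[\chi]\,\ba_\e[U])^{1/2}$, and the same conversion to $\|A_\e^{-1}-T_\e^{-1}\|=O(\e^{3\a})$ followed by the triangle inequality. The only substantive difference is computational: you straighten the strip via $s=y/(\e h(x))$ and read off the cross term from the Fourier expansion in $s$, whereas the paper stays in $(x,y)$ coordinates, differentiates the orthogonality relation \eqref{2:1} to obtain \eqref{2:2}, and writes $\bm_\e$ explicitly as in \eqref{2:2z} using the auxiliary $\phi=h^{-1/2}\chi$---your cancellation ``the first-mode coefficient of $L\Psi_\eta$ is exactly $\eta'$'' is the straightened-coordinate counterpart of the paper's passage from $\chi$ to $\phi$.
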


\vskip0.2cm

 The next statement describes, in what sense the
operators $\BQ_\e$ approximate the operator $\BH$ given by
\eqref{1:s2}. Introduce the family of segments
\[I_\e=(-a\e^{-\a},b\e^{-\a}),\qquad \e>0\]
and define the isometry operator $\BJ_\e:L^2(I)\to L^2(I_\e)$
generated by the dilation $x=t\e^{\a}$. 
We identify $L^2(I_\e)$ with the subspace
\[\{ u\in L^2(\R):\, u(x)=0\ {\text{a.e. on}}\ \R\setminus I_\e\}.\]
If $\BQ_\e$ is the operator \eqref{1:4c} in $L^2(I)$, then
\begin{equation}\label{1:8x}
\wh\BQ_\e:=\e^{2\a}\BJ_\e\BQ_\e\BJ_\e^{-1}
\end{equation}
is a self-adjoint operator acting in $L^2(I_\e)$.

\begin{thm}\label{1:t1}
One has
\begin{equation}\label{1:9}
\left\|\wh\BQ_\e^{-1}\oplus \bsymb0-\BH^{-1}\right\|\to 0,
\qquad\e\to 0.
\end{equation}
were $\bsymb0$ is the zero operator on the subspace
$L^2(\R\setminus I_\e)$.
\end{thm}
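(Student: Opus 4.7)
The plan is to rewrite $\wh\BQ_\e$ as a Schrödinger operator on the expanding interval $I_\e$ whose potential tends locally uniformly to $q$, and then to upgrade this local convergence to operator-norm convergence on $L^2(\R)$ using a uniform confining lower bound shared by the whole family. First I would unwind the dilation: the change of variables $x = t\e^\a$ turns \eqref{1:4c} into
$$
\wh\BQ_\e = -\frac{d^2}{dt^2} + \wt W_\e(t),\qquad \wt W_\e(t) := \e^{2\a}W_\e(t\e^\a),
$$
acting on $L^2(I_\e)$ with Dirichlet boundary conditions. Substituting \eqref{1:1} into \eqref{1:4g} and using the identity $(m+2)\a = 2$, a routine computation gives $\wt W_\e(t) = q(t) + O_K(\e^\a)$ uniformly on every compact $K\subset\R$. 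Moreover, since $h$ attains its strict maximum only at $x=0$, there exist constants $c_0,\e_0 > 0$ such that
$$
\wt W_\e(t) \geq c_0(1 + |t|^m),\qquad t \in I_\e,\ |t|\geq 1,\ \e\in(0,\e_0),
$$
a coercive lower bound uniform in $\e$ and comparable to $q$ itself. A form comparison with $\BH$ then also yields a uniform positive lower bound $\wh\BQ_\e \geq \mu_0 > 0$.

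Second, I would convert the uniform coercivity into uniform localization of the resolvents. For $f\in L^2(\R)$ with $\|f\|\leq 1$ and $u = \wh\BQ_\e^{-1}(f|_{I_\e})$, the energy identity $\int((u')^2 + \wt W_\e u^2)\,dt = \int f u\, dt$ combined with the two bounds above gives
$$
\int_{|t|\geq R}|u(t)|^2\,dt \leq \frac{C}{1+R^m},\qquad R\geq 1,
$$
uniformly in $\e\in(0,\e_0)$; an analogous bound holds for $\BH^{-1}$ (which, being compact with $\mathbf 1_{|t|\geq R}\to 0$ strongly, even satisfies $\|\BH^{-1}\mathbf 1_{|t|\geq R}\|\to 0$). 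Introducing a smooth cutoff $\chi_R$ equal to $1$ on $[-R,R]$ and supported in $[-2R,2R]$, one finds that both $\wh\BQ_\e^{-1}\oplus\bsymb 0$ and $\BH^{-1}$ differ from their $\chi_R$-sandwiched truncations by $o_R(1)$ in norm, uniformly in $\e$.

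Third, on the fixed window I would apply the resolvent identity
$$
\wh\BQ_\e^{-1} - \wt\BH_\e^{-1} = \wh\BQ_\e^{-1}(q-\wt W_\e)\wt\BH_\e^{-1},
$$
where $\wt\BH_\e$ denotes $-d^2/dt^2 + q$ on $L^2(I_\e)$ with Dirichlet conditions (the ``restriction'' of $\BH$ to $I_\e$). The multiplier $(q-\wt W_\e)\chi_R$ is uniformly small in $L^\infty$ by step one; the commutators of $\chi_R$ with $\wh\BQ_\e$ and $\wt\BH_\e$ produce first-derivative errors that are absorbed by the tail estimates of step two; and the standard comparison $\wt\BH_\e^{-1}\to\BH^{-1}$ in norm (again via compactness of $\BH^{-1}$ as $I_\e\uparrow\R$) closes the gap. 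Letting first $\e\to 0$ and then $R\to\infty$ yields \eqref{1:9}. The main obstacle is the mismatch of Hilbert spaces $L^2(I_\e)$ versus $L^2(\R)$ when invoking the resolvent identity: the cutoff $\chi_R$ and the auxiliary operator $\wt\BH_\e$ are the devices that reconcile it, and the uniform polynomial localization of step two is precisely what makes the commutator errors vanish.
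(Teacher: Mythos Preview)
Your strategy is sound and arrives at the same conclusion, but the route differs from the paper's. The paper does not set up explicit tail estimates and commutator arguments; instead it packages the ``uniform localization'' into an abstract statement (Proposition~\ref{2:pr}): if $V\ge V_0$ with $V_0(t)\to\infty$, then the Dirichlet restrictions $\BZ_{V,I}^{-1}$ on expanding intervals converge in norm to $\BZ_V^{-1}$, \emph{uniformly} over all such $V$. The proof of that proposition is purely operator-theoretic---monotonicity $\BZ_{V,I}^{-1}\le\BZ_V^{-1}$ plus compactness of $\BZ_{V_0}^{-1}$ feeds into Simon's dominated-convergence theorem for compact operators. The paper then introduces a \emph{second}, slower-growing interval $I'_\e=(-\e^{-\b},\e^{-\b})$ with $\b(m+1)<\a$; on $I'_\e$ the potential difference $V_\e-q$ is uniformly small, so the resolvent identity $\wh{\BQ'}_\e^{-1}-{\BH'_\e}^{-1}=-\wh{\BQ'}_\e^{-1}(V_\e-q){\BH'_\e}^{-1}$ is trivial there, while the uniform part of Proposition~\ref{2:pr} bridges $I'_\e$ to $I_\e$ and (the non-uniform part) $I'_\e$ to $\R$. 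Your cutoff $\chi_R$ plays the role of the paper's $I'_\e$, and your tail bound $\int_{|t|\ge R}|u|^2\le C/(1+R^m)$ is a hands-on substitute for the dominated-convergence step. What the paper's arrangement buys is that the resolvent identity is only ever invoked on a region where $V_\e-q$ is \emph{bounded} and small; in your scheme the identity is written on all of $I_\e$, where $q-\wt W_\e$ is unbounded, and the commutator step meant to localize it is the one place your sketch is thin---moving $\chi_R$ through one resolvent still leaves an unlocalized factor $(q-\wt W_\e)\wt\BH_\e^{-1}$ in the remainder. This is repairable (for instance by comparing first to the Dirichlet problem on a fixed window $[-2R,2R]$ rather than on $I_\e$, which is exactly the spirit of the paper's $I'_\e$), but it is the spot where the paper's organization is cleaner.
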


\vskip0.2cm
Let us present another formulation of the latter result. It suggests an 
interpretation that seems to be more transparent. However, the formulation 
as in theorem 1.3 is more convenient for the proof.

Along with the operator $\BH$ defined in \eqref{1:s2}, let us consider the 
operator family
\[ \BH_\e=-\frac{d^2}{dx^2}+\e^{-2}q(x),\quad \e>0 \]
on $L^2(\R)$, so that in particular $\BH_1=\BH$. The substitution
$x=t\e^\a$ shows that $\e^{2\a}\BH_\e$ is an isospectral family of operators.
The result of theorem 1.3 can be rewritten as
\[ \|(\e^{2\a}\BQ_\e)^{-1}\oplus 0 -(\e^{2\a}\BH_\e)^{-1}\|\to 0,
\qquad\e\to 0.\]
This shows that the family $(\e^{2\a}\BQ_\e)^{-1}$ of operators on $L^2(I)$, 
complemented by the zero operator outside $I$, approaches an isospectral family
in the norm topology.

A similar effect, in a more complicated problem of the behavior of the 
essential spectra of certain operator families, was studied by Last and 
Simon in \cite{LS}.

\vskip0.2cm
We will show now that theorems \ref{1:t2} and \ref{1:t1}  imply theorem
\ref{1:t0}. Indeed, the non-zero eigenvalues of the operator
$\BQ_\e^{-1}\oplus\bsymb0$ are the same as those of $\BQ_\e^{-1}$.
By theorem \ref{1:t2} we have for all $j\in\N$ and $\e<\e_0$:
\begin{gather*}
\left|\left(\l_j(\e)-\frac{\pi^2}{M^2\e^2}\right)^{-1}-\l_j^{-1}(\BQ_\e)\right|
\le\left\|\left(\D_\e-\frac{\pi^2}{M^2\e^2}\BI_\e\right)^{-1}-
\BQ_\e^{-1}\oplus\bsymb{0}\right\|\\  \le R_0\e^{3\a};
\end{gather*}
 therefore
\[\left|\frac1{\e^{2\a}\left(\l_j(\e)-\frac{\pi^2}{M^2\e^2}\right)}-
\frac1{\e^{2\a}\l_j(\BQ_\e)}\right|\le R_0\e^\a.\]
 Further, $\l_j(\wh\BQ_\e)=\e^{2\a}\l_j(\BQ_\e)$, so that theorem
\ref{1:t1} implies
\[\e^{2\a}\l_j(\BQ_\e)\to\mu_j\]
which coincides with \eqref{1:s1}.

\vskip0.2cm 
Theorems 1.2 and 1.3 also allow one to make some conclusions about the
behavior of the eigenfunction of the operator $\D_\e$ as $\e\to 0$.

Let $\Psi_{j,\e}(x,y)$ and $\wt\Psi_{j,\e}(x)$ be the $j$-s normalized
eigenfunctions of the operators $\D_\e$ and $\BQ_{\e}$ respectively.
Then theorem 1.2 implies that if the signs of both eigenfunctions are
chosen appropriately, then 
\begin{equation}\label{1:d}
\int_I\left|\bigl(\BP_\e\Psi_{j,\e}\bigr)(x)-\wt\Psi_{j,\e}(x)\right|^2dx
\le C_j^2\e^{6\a}.
\end{equation}
Here $\BP_\e$ is the orthogonal projection in 
$L^2(\Om_\e)$ onto the subspace $\CL_\e$. See  \eqref{3:op}
for the explicit formula  
for $\BP_\e$.

Similarly, theorem \ref{1:t1} yields
\begin{equation}\label{1:e}
\int_I\left|\wt\Psi_{j,\e}(x)-\e^{-\a/2}X_j(x\e^{-\a})\right|^2dx\to 0.
\end{equation}
where $X_j$ is the $j$-s normalized eigenfunction of the operator $\BH$.

Grieser and Jerison proved in \cite{GJ} much stronger an estimate
for the first eigenfunction in a convex, narrow domain (in our setting,
the function $h(x)$ is concave.) Similar problems are discussed in a survey
paper \cite{N} by Nazarov.

\vskip0.2cm In the next three sections we prove
theorems \ref{1:t2} and \ref{1:t1}. In section \ref{sec5new} we explain 
the derivation of the inequalities \eqref{1:d} and \eqref{1:e}, and in the 
 last section \ref{rem} we describe possible extensions of our main results.
\newline
{\bf Acknowledgements}. The bulk of the work was done when the first author
was the Weston Visiting Professor in the Weizmann Institute of Science
in Rehovot, Israel. He thanks the Institute for its hospitality.
The work was finished when both authors visited the Isaac Newton Institue
for Mathematical Sciences in Cambridge, UK. We acknowledge the hospitality
of the Newton Institute. We are also grateful to V.~Maz'ya and
S.~Nazarov for their bibliographical advice and to A.~Sobolev for
discussions.

\section{Upper bound for $\|\BQ_\e^{-1}\|$}\label{up:bnd}
As the first step, we find an upper bound
for the quantity $\|\BQ_\e^{-1}\|$ as $\e\to 0$. Notice that theorem 
\ref{1:t1} implies $\|\BQ_\e^{-1}\|\sim \mu_1^{-1}\epsilon^{2\alpha}$,
which is stronger a result than the following lemma.

\begin{lem}\label{1:ld}
Let $h(x)$ meet the properties ({\it{i}}), ({\it{ii}}) of section
\ref{prel}. Then there exists a number $R_1>0$ such that
\begin{equation}\label{1:m}
    \|\BQ_\e^{-1}\|\le R_1\e^{2\a},\qquad\forall\e>0.
\end{equation}
\end{lem}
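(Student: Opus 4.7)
The claim \eqref{1:m} is equivalent, via the min-max principle, to the lower bound
$$\bq_\e[\chi] \ge R_1^{-1}\e^{-2\a}\|\chi\|^2_{L^2(I)}, \qquad \forall\,\chi\in H^{1,0}(I).$$
The plan is to first compare $W_\e$ pointwise with the target potential $q(x)$ of \eqref{1:s2}, and then to rescale the variable so that the resulting form is controlled by the lowest eigenvalue of $\BH$.

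\textbf{Step 1: pointwise comparison.} Since $v(x)\ge 0$, one can discard it and focus on $\pi^2\e^{-2}(h^{-2}(x)-M^{-2})$. Writing $h^{-2}-M^{-2} = (M-h)(M+h)/(M\,h)^2$ and applying the expansion \eqref{1:1}, one obtains constants $\delta,c_0 > 0$ with $h^{-2}(x)-M^{-2} \ge c_0|x|^m$ for $|x|\le\delta$. On the complementary region in $I$, the uniqueness of the maximum (hypothesis (i)) gives $h(x)\le M-\eta$ for some $\eta>0$, so $h^{-2}-M^{-2}$ is bounded below there by a positive constant. Comparing with $q(x)\le C|x|^m$ yields, uniformly in $\e>0$ and $x\in I$,
$$W_\e(x) \ge c_1\e^{-2}\,q(x)$$
for some $c_1>0$ depending only on $h$.

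\textbf{Step 2: rescaling.} For $\chi\in H^{1,0}(I)$ set $\tilde\chi(t):=\e^{\a/2}\chi(\e^\a t)$, extended by zero outside $I_\e$. Then $\tilde\chi\in H^1(\R)$ with $\|\tilde\chi\|_{L^2(\R)}=\|\chi\|_{L^2(I)}$, and a routine change of variables gives $\|\chi'\|^2_{L^2(I)} = \e^{-2\a}\|\tilde\chi'\|^2_{L^2(\R)}$. The homogeneity $q(\e^\a t)=\e^{m\a}q(t)$ combined with $(m+2)\a=2$ (equivalently $m\a-2=-2\a$) then produces $\e^{-2}\int_I q(x)\chi^2\,dx = \e^{-2\a}\int_\R q(t)\tilde\chi^2\,dt$. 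Combining this with Step 1,
$$\bq_\e[\chi] \ge \e^{-2\a}\min(1,c_1)\int_\R\bigl(\tilde\chi'(t)^2+q(t)\tilde\chi(t)^2\bigr)dt \ge \e^{-2\a}\min(1,c_1)\,\mu_1\,\|\chi\|^2_{L^2(I)},$$
where $\mu_1>0$ is the ground-state eigenvalue of $\BH$; its positivity is standard since $q\to\infty$ makes $\BH$ have compact resolvent with strictly positive ground state. This yields \eqref{1:m} with $R_1=(\min(1,c_1)\mu_1)^{-1}$.

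The only nontrivial step is the uniform pointwise comparison $W_\e \ge c_1\e^{-2}q$, which merges the local Taylor behavior \eqref{1:1} of $h$ at the maximum with the global fact that the maximum is unique. Once this is in hand, the exponent $2\a=4/(m+2)$ emerges automatically from the scaling that balances the kinetic and potential contributions.
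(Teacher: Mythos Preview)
Your proof is correct and follows essentially the same strategy as the paper's. Both arguments establish a pointwise lower bound $W_\e(x)\ge c\,\e^{-2}|x|^m$ on $I$, then rescale by $x\mapsto \e^\a x$ to extract the factor $\e^{-2\a}$ from the quadratic form, using the positivity of the ground state of a Schr\"odinger operator with potential $\sim|x|^m$ on $\R$. The only cosmetic differences are that the paper obtains the pointwise bound in one stroke---observing that $|x|^{-m}\bigl(h^{-2}(x)-M^{-2}\bigr)$ is continuous and strictly positive on $[-a,0]$ and on $[0,b]$, hence bounded below by some $\s>0$---whereas you split into a neighborhood of $0$ and its complement; and the paper compares with the simpler potential $\s|x|^m$ rather than with $c_1 q(x)$, which avoids carrying the factor $\min(1,c_1)$.
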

\begin{proof}
One has (see \eqref{1:4g})
\[\frac{\e^2 W_\e(x)}{|x|^{m}}\ge
\frac{\pi^2}{|x|^{m}}\left(\frac1{h^2(x)}-\frac1{M^2}\right).\]
The function on the
right in the last inequality is strictly positive on $I$ and continuous 
on $[-a,0]$ and on $[0,b]$ 
(it equals $2\pi^2 c_\pm M^{-3}$ at $x=0\pm$; $M$ and $c_\pm$ are
numbers from \eqref{1:1}). Hence, there exists $\s>0$
such that
\begin{equation}\label{1:n}
    W_\e(x)\ge\s\e^{-2}|x|^{m},\qquad\forall \e>0,\ x\in I.
\end{equation}
The operator $-\frac{d^2}{dx^2}+\s |x|^{m}$ on $L^2(\R)$ is
positive definite; so
\[ \int_\R(\chi'(x)^2+\s |x|^{m}\chi^2(x))dx\ge R_1^{-1}\int_\R
\chi^2(x)dx,\qquad\forall \chi\in H^1(\R)\]
for some positive number $R_1$. By scaling $x\mapsto\e^{-\a}x$ we get
\[ \int_\R(\chi'(x)^2+\s \e^{-2}|x|^{m}\chi^2(x))dx\ge R_1^{-1}\e^{-2\a}\int_\R
\chi^2(x)dx,\qquad\forall \chi\in H^1(\R).\]
 In particular, this inequality is satisfied for any function
 $\chi\in H^{1,0}(I)$, extended to the whole of $\R$ by zero.
It follows from here and \eqref{1:n} that
\begin{equation}\label{1:p}
\bq_\e[\chi]\ge R_1^{-1}\e^{-2\a}\int_I \chi^2
dx,\qquad\forall\chi\in H^{1,0}(I),
\end{equation}
which implies \eqref{1:m}.
\end{proof}

\section{Proof of \thmref{1:t2}}\label{thm2}
We will systematically use the orthogonal decomposition
\begin{equation}\label{1:10a}
L^2(\Om_\e)=\CL_\e\oplus \CL_\e^\perp,
\end{equation}
and write, for $\psi\in L^2(\Om_\e)$,
\[\psi=\psi_\chi+U,\qquad \psi_\chi= \BP_\e\psi,\ U\perp \CL_\e.\]
Here $\BP_\e$ stands for the orthogonal projection in
$L^2(\Om_\e)$ onto the subspace $\CL_\e$. This projection is given
by
\begin{equation}\label{3:op}\BP_\e\psi=\psi_\chi,\ {\text{where}}\ 
\chi(x)=\sqrt{\frac2{\e
h(x)}}\int_0^{\e h(x)}\psi(x,y)\sin\frac{\pi y}{\e h(x)}dy.\end{equation}
Note that
 \[\BP_\e H^{1,0}(\Om_\e)=H^{1,0}(I).\]

The inclusion $U\in \CL_\e^\perp$ means that
\begin{equation}\label{2:1}
\int_0^{\e h(x)}U(x,y)\sin\frac{\pi y}{\e h(x)}dy=0,\qquad
{\text{for a.a.}}\ x\in I.
\end{equation}
If $U\in H^{1,0}(\Om_\e)$, then integration by parts gives
\begin{equation}\label{2:1x}
    \int_0^{\e h(x)}U'_y(x,y)\cos\frac{\pi y}{\e h(x)}dy=0,\qquad
{\text{for a.a.}}\ x\in I.
\end{equation}
Because $U$ satisfies the Dirichlet boundary condition, \eqref{2:1}
implies
\begin{gather*}
\int_0^{\e h(x)}U^2(x,y)dy\le\frac{\e^2h^2(x)}{4\pi^2}
\int_0^{\e h(x)}U'_y(x,y)^2dy\\
\le\frac{M^2\e^2}{4\pi^2}\int_0^{\e h(x)}U'_y(x,y)^2dy
\end{gather*}
and therefore, for $U\in \CL_\e^\perp\cap H^{1,0}(\Om_\e)$
\begin{equation}\label{2:4}
\|U\|^2_{L^2(\Om_\e)}\le
\frac{M^2\e^2}{3\pi^2}\int_{\Om_\e}\left(|\nabla
U|^2-\frac{\pi^2}{M^2\e^2}|U|^2\right)dxdy.
\end{equation}

In addition, if  $U\in H^{1,0}(\Om_\e)$ then
one can differentiate \eqref{2:1} with respect to $x$ to get
\begin{equation}\label{2:2}
\int_0^{\e h(x)}U'_x(x,y)\sin\frac{\pi y}{\e h(x)}dy=
\frac{\pi}{\e}\wt h(x) \int_0^{\e h(x)}yU(x,y)\cos\frac{\pi y}{\e
h(x)}dy.
\end{equation}
Here and later, we use the notation
\[ \wt h(x)=\frac{h'(x)}{h^2(x)};\]
this function repeatedly appears in our calculations. \vskip0.2cm

For the proof of theorem \ref{1:t2} we compare the quadratic forms
of the operator
\[\BA_\e=\D_\e-\frac{\pi^2}{M^2\e^2}\BI_\e\]
appearing in \eqref{1:10}, and of its diagonal part with respect
to the decomposition \eqref{1:10a}, which is
\begin{equation}\label{2:2p}
     \BB_\e=\BQ_\e\oplus\bigl((\BI-\BP_\e)\BA_\e \res
\CL_\e^\perp\bigr).
\end{equation}
 The quadratic form of $\BB_\e$ is
(again, for $\psi=\psi_\chi+U$)
\[\bb_\e[\psi]=\bq_\e[\chi]+\int_{\Om_\e}\left(|\nabla U|^2-
\frac{\pi^2}{M^2\e^2}|U|^2\right)dxdy,\]
where $\bq_\e$ is given by \eqref{1:4a}. From \eqref{1:p} and
\eqref{2:4} we conclude that with some $C>0$
\begin{equation}\label{2:2b}
    \bb_\e[\psi]\ge C\e^{-2\a}\|\psi\|^2,\qquad\forall\psi\in
    H^{1,0}(\Om_\e).
\end{equation}

The quadratic form of $\BA_\e$ is
\[\ba_\e[\psi]=\bb_\e[\psi]+2\bm_\e[\psi]\]
where (one half of) the off-diagonal term is
\[\bm_\e[\psi]=\int_{\Om_\e}\left(\nabla\psi_\chi\cdot\nabla U -
\frac{\pi^2}{M^2\e^2}\psi_\chi U \right)dxdy=
\int_{\Om_\e}(\psi_\chi)'_xU'_x dxdy;\]
 the integral containing $(\psi_\chi)'_yU'_y$ vanishes because of 
\eqref{2:1x} and the one containing
$\psi_\chi U$ vanishes  because $\psi_\chi$ and $U$ belong to
orthogonal subspaces of $L^2(\Omega_\epsilon)$.

We will now estimate $\bm_\e[\psi]$. It is convenient to work  with the
function
\[\phi(x)=h^{-1/2}(x)\chi(x).\]
instead of $\chi$.
It is easy to see that for each $\chi\in H^1(I)$ we have
\begin{equation}\label{2:2a}
\|\phi\| \asymp\|\chi\|,\qquad
\|\phi'\|^2+\|\phi\|^2 \asymp\|\chi'\|^2+\|\chi\|^2\le C\bq_\e[\chi];
\end{equation}
the symbol $\asymp$ stands for two-sided inequality. Taking \eqref{2:2} 
into account, we find that
\begin{equation}\label{2:2z}
\bm_\e[\psi]=
\frac{\sqrt2\pi}{\e^{3/2}}\int_{\Om_\e} \wt h(x)\cos\frac{\pi
y}{\e h(x)}\left(\phi'U -\phi U'_x\right)ydxdy.
\end{equation}
The next estimate follows immediately:
\begin{gather*}
\bm^2_\e[\psi]\\  \le  C\e^{-3}
\biggl(\|U\|^2_{L^2(\Om_\e)}\int_{\Om_\e}{\phi'(x)}^2y^3dxdy
+\|U'_x\|^2_{L^2(\Om_\e)}\int_{\Om_\e}\phi^2(x)y^3dxdy\biggr)\\
\le C\left(\|U\|^2_{L^2(\Om_\e)}\|\phi'\|^2_{L^2(I)}+
\|U'_x\|^2_{L^2(\Om_\e)}\|\phi\|^2_{L^2(I)}\right).
\end{gather*}
Now we conclude from \eqref{2:4}, \eqref{2:2a}, and \eqref{1:p} that
\[ \bm^2_\e[\psi]\le C\left(\e^2\bb_\e[U]\bq_\e[\chi]+
\e^{2\a}\bb_\e[U]\bq_\e[\chi]\right),\]
whence
\begin{equation}\label{2:3}
 |\bm_\e[\psi]|\le C'\e^{\a}\bb_\e[\psi].
\end{equation}
It is important that the constant $C'$ does not depend on $\e$.

The quadratic form $\bb_\e$ is positive definite. Choosing
 $\e_0=(4C')^{-1/\a}$, we conclude from \eqref{2:3} that
\begin{equation*}%\label{2:3a}
    (1-C'\e^\a)\bb_\e[\psi]\le \ba_\e[\psi]\le
    (1+C'\e^\a)\bb_\e[\psi],\qquad\forall\e<\e_0
\end{equation*}
for all $\psi\in H^{1}(\Om_\e)$. Hence, for $\e<\e_0$ the
quadratic form $\ba_\e$ is also positive definite. Taking
\eqref{2:2b} into account, we find that there exists a positive
constant $C$ such that
\begin{equation*}%\label{2:4c}
\ba_\e[\psi]\ge C^{-1}\e^{-2\a}\|\psi\|^2,\ \bb_\e[\psi]\ge
C^{-1}\e^{-2\a}\|\psi\|^2,\qquad\forall\e<\e_0,
\end{equation*}
or, equivalently,
\begin{equation}\label{2:4f}
    \|\BA_\e^{-1}\|\le C\e^{2\a}, \ \|\BB_\e^{-1}\|\le
    C\e^{2\a},\qquad\forall\e<\e_0.
\end{equation}

The estimate \eqref{2:3} implies an estimate for the
bilinear form $\bm_\e[\psi_1,\psi_2]$ which corresponds to the
quadratic form $\bm_\e[\psi]$, i.e.
\[ \bm_\e[\psi_1,\psi_2]=\int_{\Om_e}(\psi_{\chi_1})'_x(U_2)'_x dxdy.\]
Namely,
\begin{equation}\label{2:5a}
|\bm_\e[\psi_1,\psi_2]|\le
C'\e^\a\left(\bb_\e[\psi_1]\bb_\e[\psi_2]\right)^{1/2},
\end{equation}
 with the same constant $C'$ as in \eqref{2:3}.
\vskip0.2cm Note that in the right-hand side of \eqref{2:5a}
each factor $\bb_\e[\psi_j]$ can be replaced by $\ba_\e[\psi_j]$;
that will result in a change of the constant $C'$,
which is not essential.

\vskip0.2cm

 We have, for any $\psi_1,\psi_2\in
H^{1,0}(\Om_\e)$:
\begin{gather*}
\left|(\BA_\e^{1/2}\psi_1,\BA_\e^{1/2}\psi_2)-
(\BB_\e^{1/2}\psi_1,\BB_\e^{1/2}\psi_2)
\right|=|\ba_\e[\psi_1,\psi_2]-\bb_\e[\psi_1,\psi_2]|\\
=2|\bm_\e[\psi_1,\psi_2]|\le
C\e^\a(\ba_\e[\psi_1]\bb_\e[\psi_2])^{1/2}.
\end{gather*}
Take here $\psi_1=\BB_\e^{-1}f,\ \psi_2=\BA_\e^{-1}g$, where
$f,g\in L^2(\Om_\e)$ are arbitrary. Then we get by \eqref{2:4f}:
\begin{gather*}
|(\BA_\e^{-1}f,g)-(\BB_\e^{-1}f,g)| \le C\e^\a\bigl((\BA_\e^{-1}g,g)
(\BB_\e^{-1}f,f)\bigr)^{1/2}\le C\e^{3\a}\|f\|\|g\|;
\end{gather*} therefore
\begin{equation}\label{2:6a}
\|\BA_\e^{-1}-\BB_\e^{-1}\|\le C\e^{3\a}.
\end{equation}

It follows from \eqref{2:2p} and \eqref{2:4} that
\[ \|\BB_\e^{-1}-(\BQ_\e^{-1}\oplus\bsymb0)\|=\|
\bigl((\BI-\BP_\e)\BA_\e\res \CL_\e^\perp\bigr)^{-1}\|\le
M^2\e^2/3\pi^2.\] Together with \eqref{2:6a}, this completes the
proof of theorem \ref{1:t2}.

\section{Proof of theorem \ref{1:t1}}\label{str}
In the course of the proof we rely upon the following statement.

\begin{prop}\label{2:pr}
$1^\circ$ Let $V(x)\ge0$ be a measurable function on $\R$, such
that $V(x)\to\infty$ as $|x|\to\infty$, and let $\{I_\e\},\
0<\e<1,$ be an expanding family of intervals:
\[I_{\e_1}\subset I_{\e_2}\ (\e_1>\e_2),\qquad \cup_{0<\e<1}I_\e=\R.\]
Consider the quadratic form
\[ \bz_V[u]=\int_\R({u'}^2+Vu^2)dx,\qquad u\in\gd_V:=
\{H^1(\R): \bz_V[u]<\infty\}\]
and a family of its restriction $\bz_{V,I_\e}$ to the domains
\[ \gd_{V,I_\e}=\{u\in\gd_V:u\left\vert_{\p I_\e}=0\right\}.\]
 Let $\BZ_V,\BZ_{V,I_\e}$ stand for the corresponding self-adjoint 
operators on $L^2(\R)$.
Then
\begin{equation}\label{1:9x}
\|\BZ_{V,I_\e}^{-1}-\BZ_V^{-1}\|\to 0,\qquad \e\to 0,
\end{equation}
\vskip0.2cm

$2^\circ$ Let a potential $V_0\ge 0$ be fixed, such that
$V_0(x)\to\infty$ as $|x|\to\infty$. Then the convergence in
\eqref{1:9x} is uniform in the class of all potentials $V$ such
that
\begin{equation*}%\label{1:9y}
V(x)\ge V_0(x)\qquad{\text{on}}\ \R.
\end{equation*}
\end{prop}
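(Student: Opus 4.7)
The plan is to combine two structural facts. First, for each $u\in L^2(\R)$, the function $f_\e:=\BZ_{V,I_\e}^{-1}u$ is the energy-orthogonal projection of $f:=\BZ_V^{-1}u$ onto $\gd_{V,I_\e}$, so that $\|f-f_\e\|_V\le\|f-\eta f\|_V$ for any $\eta$ with $\eta f\in\gd_{V,I_\e}$. Second, in the range of $\BZ_V^{-1}$ applied to the unit ball of $L^2(\R)$ the energy tail $\int_{|x|\ge R/2}(|f'|^2+Vf^2)\,dx$ admits a bound $C(V_0,R)\|u\|^2$ with $C(V_0,R)\to 0$ as $R\to\infty$, depending only on $V_0$. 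Taken together these give norm convergence of the resolvents, uniformly over $V\ge V_0$.

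For the projection identity I would write the weak formulations $(f,g)_V=(u,g)$ for $g\in\gd_V$ and $(f_\e,g)_V=(u,g)$ for $g\in\gd_{V,I_\e}$, where $(\cdot,\cdot)_V$ is the inner product induced by $\bz_V$. Since $\gd_{V,I_\e}\subset\gd_V$, subtraction gives $(f-f_\e,g)_V=0$ for all $g\in\gd_{V,I_\e}$, which is precisely the statement that $f_\e$ is the energy-orthogonal projection of $f$ onto $\gd_{V,I_\e}$. Choosing $\eta_R\in C^\infty(\R)$ with $\eta_R=1$ on $[-R/2,R/2]$, support in $(-R,R)$, and $|\eta_R'|\le 4/R$, we have $\eta_R f\in\gd_{V,I_\e}$ as soon as $[-R,R]\subset I_\e$, so the variational comparison $\|f-f_\e\|_V\le\|(1-\eta_R)f\|_V$ holds.

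For the tail estimate I would test the equation $-f''+Vf=u$ against $\chi_R^2 f$, where $\chi_R\in C^\infty(\R)$ satisfies $0\le\chi_R\le 1$, $\chi_R=1$ on $|x|\ge R/2$, $\chi_R=0$ on $|x|\le R/4$, and $|\chi_R'|\le 8/R$. Integration by parts combined with the Young inequality $|2\chi_R\chi_R'ff'|\le\tfrac12\chi_R^2(f')^2+2(\chi_R')^2f^2$ yields
\[
\tfrac12\int\chi_R^2(f')^2\,dx+\int\chi_R^2Vf^2\,dx \le \int u\,\chi_R^2 f\,dx + 2\int(\chi_R')^2f^2\,dx.
\]
On the supports of $\chi_R$ and $\chi_R'$ we have $V\ge V_0\ge V_{0,R/4}:=\inf_{|x|\ge R/4}V_0\to\infty$, so using $f^2\le V_{0,R/4}^{-1}Vf^2$ there together with the standard bound $\|f\|_V\le\l_1(V_0)^{-1/2}\|u\|$ (from $\|f\|_V^2=(u,f)$ and Poincar\'e$/$min-max), I can dominate both right-hand terms by $C(V_0,R)\|u\|^2$ with $C(V_0,R)\to 0$ as $R\to\infty$. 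An analogous cut-off argument controls $\|(1-\eta_R)f\|_V^2$ by a quantity of the same form.

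Given $\d>0$, I would choose $R$ so large that $C(V_0,R)<\d$, then $\e_0$ so small that $[-R,R]\subset I_\e$ for $\e<\e_0$. Combining the projection inequality with the tail bound and the estimate $\|\cdot\|_{L^2}^2\le\l_1(V_0)^{-1}\|\cdot\|_V^2$, this gives $\|\BZ_{V,I_\e}^{-1}-\BZ_V^{-1}\|^2\le\l_1(V_0)^{-1}\d$ for all $V\ge V_0$ and $\e<\e_0$, proving both $1^\circ$ (taking $V_0:=V$ for part one) and $2^\circ$. The main technical point is the absorption in the tail estimate: the cutoff $\chi_R$ must be designed so that the product-rule term $2\chi_R\chi_R'ff'$ can be swallowed by $\tfrac12\int\chi_R^2(f')^2$ via Young's inequality, leaving only quantities controllable by $V_0$ alone; without this, the rate would be $V$-dependent and the uniform statement $2^\circ$ would fail.
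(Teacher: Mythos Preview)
Your argument is correct and self-contained, but it proceeds quite differently from the paper. The paper's proof is abstract: it notes that $\gd_{V,I_\e}\subset\gd_V$ implies the operator inequality $\BZ_{V,I_\e}^{-1}\le\BZ_V^{-1}$, that $\BZ_V^{-1}$ is compact because $V\to\infty$, and that strong resolvent convergence follows from a standard core argument (Kato). Norm convergence is then deduced from a dominated-convergence theorem for compact operators (Simon's Theorem~2.16), and the uniformity in $2^\circ$ is read off from an inequality inside that theorem's proof. Your route is instead a direct quantitative energy argument: the C\'ea/Galerkin identity reduces matters to controlling $\|(1-\eta_R)f\|_V$, and a Caccioppoli-type localization of the equation $-f''+Vf=u$ shows that the energy tail is bounded by a constant depending only on $V_0$ and $R$. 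What your approach buys is an explicit, $V_0$-dependent rate and no external citations; what the paper's approach buys is brevity and a structural explanation (monotone family dominated by a compact operator) that generalizes painlessly to other settings where cutoffs might be awkward.
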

\begin{proof}
$1^\circ$ Under the assumptions of proposition the strong
convergence $\BZ_{V,I_\e}^{-1}\to\BZ_V^{-1}$ is well known. For
instance, it follows from theorem VIII.1.5 in the book \cite{K}.
Its assumptions are evidently satisfied if we take into account
that $C_0^\infty(\R)$ is a core for the operator $\BZ_V$.

For each $\e$ we have $\gd_{V,I_\e}\subset\gd_V$ and
$\bz_{V,I_\e}[u]=\bz_{V}[u]$ for every $u\in \gd_{V,\e}$.
By the definition of inequalities between self-adjoit operators
(see e.g. \cite{BS}, section 10.2.3),
this means that $\BZ_{V,I_\e}\ge
\BZ_V$ and, by theorem 10.2.6 from \cite{BS},
\[\BZ^{-1}_{V,I_\e}\le \BZ^{-1}_V.\]
Since $V(x)\to\infty$ as $|x|\to\infty$, the operator $\BZ^{-1}_V$
is compact.

Now, we get the statement $1^\circ$ by applying
theorem 2.16 in \cite{S} (which is an analogue of the classical Lebesgue
theorem on dominated convergence). In particular, the theorem
says that if $\BT_\e,\ 0<\e<\e_0$ is a family of compact, self-adjoint
operators such that $\BT_\e \to \BT$ strongly, and there
exists a compact non-negative operator $\BT_0$, such that $|\BT_\e|\le
\BT_0$
for each $\e$, then $\|\BT_\e-\BT\|\to 0$.

$2^\circ$ Actually, this statement also is a consequence of
theorem  2.16 in \cite{S}. More exactly, it immediately follows from the
last displayed inequality in the proof of theorem.
\end{proof}

Each operator $\BZ_{V,I_\e}$ appearing in the formulation is the
direct sum of the operators inside and outside the interval
$I_\e$, generated by the differential expression $-d^2/dx^2+V$ and
the Dirichlet conditions at $\p I_\e$. Let us denote these
operators as $\BZ_{V,int(I_\e)},\ \BZ_{V,ext(I_\e)}$ respectively.

\begin{cor}\label{2:cor}
Both statements of proposition \ref{2:pr} remain valid if we
replace each operator $\BZ_{V,I_\e}^{-1}$ by
$\BZ_{V,int(I_\e)}^{-1}\oplus\bsymb0$, where $\bsymb0$ is the zero
operator on the subspace $\{u\in L^2(\R): u=0\ on\ \R\setminus
I_\e\}$.
\end{cor}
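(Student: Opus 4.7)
The plan is to reduce the corollary to the proposition by showing that the ``exterior'' component $\BZ_{V,ext(I_\e)}^{-1}$ vanishes in norm as $\e\to 0$, so that replacing it by $\bsymb0$ introduces a negligible error.

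First I would make the orthogonal decomposition
\[
L^2(\R)=L^2(\text{int}(I_\e))\oplus L^2(\text{ext}(I_\e))
\]
explicit, and observe that with respect to it
\[
\BZ_{V,I_\e}^{-1}=\BZ_{V,int(I_\e)}^{-1}\oplus \BZ_{V,ext(I_\e)}^{-1},
\]
so that
\[
\BZ_{V,I_\e}^{-1}-\bigl(\BZ_{V,int(I_\e)}^{-1}\oplus\bsymb0\bigr)=\bsymb0\oplus\BZ_{V,ext(I_\e)}^{-1}.
\]
Hence by the triangle inequality
\[
\bigl\|\BZ_{V,int(I_\e)}^{-1}\oplus\bsymb0-\BZ_V^{-1}\bigr\|\le\|\BZ_{V,I_\e}^{-1}-\BZ_V^{-1}\|+\|\BZ_{V,ext(I_\e)}^{-1}\|,
\]
and by \lemref{2:pr} the first term tends to $0$. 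It remains to estimate the second term.

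To bound $\|\BZ_{V,ext(I_\e)}^{-1}\|$ I would use that $V\ge V_0$ on $\R$ and set
\[
N_\e:=\essinf_{x\in\R\setminus I_\e}V_0(x).
\]
Since $V_0(x)\to\infty$ as $|x|\to\infty$ and $I_\e$ expands to the whole real line, we have $N_\e\to\infty$ as $\e\to 0$. On $\R\setminus I_\e$ the quadratic form satisfies
\[
\int_{\R\setminus I_\e}\bigl({u'}^2+Vu^2\bigr)dx\ge N_\e\int_{\R\setminus I_\e}u^2\,dx
\]
for every admissible $u$, whence $\BZ_{V,ext(I_\e)}\ge N_\e\cdot\BI$ and therefore
\[
\|\BZ_{V,ext(I_\e)}^{-1}\|\le N_\e^{-1}.
\]
This bound depends only on $V_0$, not on $V$, so it delivers both the convergence (part $1^\circ$) and the uniformity in the class $\{V\ge V_0\}$ (part $2^\circ$) in one stroke when combined with part $2^\circ$ of \lemref{2:pr}.

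There is no real obstacle here: the proposition does the analytic work, and the corollary is essentially a bookkeeping extension. The only point that requires a moment's care is to verify that the lower bound $\BZ_{V,ext(I_\e)}\ge N_\e\cdot\BI$ holds uniformly in $V\ge V_0$, which is immediate from the fact that $N_\e$ is defined from $V_0$ alone.
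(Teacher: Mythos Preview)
Your proof is correct and follows the same idea as the paper: show that $\|\BZ_{V,ext(I_\e)}^{-1}\|\to 0$ by bounding the exterior operator from below by the infimum of the potential on $\R\setminus I_\e$, and then appeal to Proposition~\ref{2:pr}. Your version is in fact slightly more explicit than the paper's, since you take $N_\e=\essinf_{\R\setminus I_\e}V_0$ rather than $\essinf_{\R\setminus I_\e}V$, which makes the uniformity in part $2^\circ$ transparent; one cosmetic point is that \ref{2:pr} is a proposition, not a lemma.
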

Indeed, this immediately follows from the fact that
\[(\BZ_{V,ext(I_\e)}u,u)\ge \|u\|^2\inf_{x\in\R\setminus
I_\e}V(x),\] whence $\|\BZ_{V,ext(I_\e)}^{-1}\|\to 0$.

\vskip0.2cm {\it Proof of theorem \ref{1:t1}}. Let $W_\e$ be the
function defined in \eqref{1:4g} and
 \[V_\e(t)=\e^{2\a}W_\e(t\e^\a).\]
Then the quadratic
form of the operator \eqref{1:8x} is
\[\wh\bq_\e[u]=\int_{I_\e}\left(u'(t)^2+V_\e(t)u^2(t)\right)dt.\]
The assumption \eqref{1:1}, say for $x>0$, can be written as
\[ h(x)=M-c_+ x^{m}+\rho(x)x^{m+1},\qquad \rho\in L^\infty(0,b).\]
Hence,
\begin{equation*}%\label{2:9c}}
\frac1{h^2(x)}-\frac1{M^2}=2c_+ M^{-3}x^{m}+\rho_1(x)x^{m+1},\qquad
\rho_1\in L^\infty(0,b);
\end{equation*}
therefore
\begin{equation*}%\label{2:10}
V_\e(t)=q(t)+\pi^2\rho_1(t\e^{\a})t^{m+1}\e^{\a}+\e^{2\a}v(t\e^\a),
\qquad t\in(0,b\e^{-\a}).
\end{equation*}
A similar equality is satisfied also for $t\in(-a\e^{-\a},0)$.

\vskip0.2cm
Along with $\{I_\e\}$, we need a system $\{I'_\e\}$ of narrower
intervals, say
\[ I'_\e =(-\e^{-\b},\e^{-\b}).\]
Here $\b>0$ can be taken arbitrary; the only condition is
$\b(m+1)<\a$. Then for any $\y>0$ there exists a number
$\e(\y)>0$, such that
\begin{equation}\label{2:11}
\left|V_\e(t)-q(t)\right|<\y\qquad{\text{for all}}\ t\in
I'_\e,\ \e<\e(\y).
\end{equation}
In addition, it follows from \eqref{1:n} that
\begin{equation}\label{2:12}
V_\e(t)\ge \s |t|^{m}\qquad{\text{for all}}\ t\in I_\e,\ \e<1.
\end{equation}
It is useful to extend each function $V_\e(t)$ to the whole of
$\R$, taking $V_\e(t)=\s |t|^{m}$ for $t\notin I_\e$. With each
(extended) function $V_\e$ we associate three operators: $\BH_\e$
acting on $L^2(\R)$, $ \wh\BQ_\e$ acting on
$L^2(I_\e)$, and $\wh{\BQ'}_\e$ acting on $L^2(I'_\e)$.
Each operator acts
as $-d^2/dt^2+V_\e(t)$; the last two operators
are taken with the Dirichlet boundary conditions. To apply 
proposition \ref{2:pr}, one takes
\[ \BH_\e=\BZ_{V_\e};\qquad \wh\BQ_\e=\BZ_{V_\e,int(I_\e)},\
\wh{\BQ'}_\e=\BZ_{V_\e,int(I'_\e)}.\] In particular, $\wh\BQ_\e$
is nothing but  the operator \eqref{1:8x}. 

By proposition \ref{2:pr}, $2^\circ$  we have
\[ \|{\wh \BQ_\e}^{-1}\oplus\bsymb0-\BH_\e^{-1}\|\to 0,
\qquad \|\wh{\BQ'}_\e^{-1}\oplus\bsymb0-{\BH_\e}^{-1}\|\to 0\] as
$\e\to 0$. Here $\bsymb0$ stands
for the zero operator on $L^2(\R)\ominus L^2(I_\e)$, or on
$L^2(\R)\ominus L^2(I'_\e)$. Therefore, 
\begin{equation}\label{2:12y}
\|{\wh \BQ_\e}^{-1}\oplus\bsymb0-\wh{
\BQ'}_\e^{-1}\oplus\bsymb0\|\to 0.
\end{equation}
Note that the operators $\wh\BQ_\e,\ \wh{\BQ'}_\e$ depend on the
parameter $\e$ in two ways: via the potential and via the
interval. For this reason, the statement $1^\circ$ of
proposition \ref{2:pr} is insufficient for making these conclusions.

Consider also the family of operators
$\BH'_\e:=\BZ_{q(t),int(I_\e')}$. They act on $L^2(I'_\e)$ as
\[\BH'_\e u =-u''+q(t)u,\]
with the Dirichlet conditions at $\p I'_\e$. This time, the
potential does not involve the parameter $\e$, and we conclude
from proposition \ref{2:pr}, $1^\circ$ that
\begin{equation}\label{2:12p}
\|{\BH'_\e}^{-1}\oplus\bsymb0-\BH^{-1}\|\to 0.
\end{equation}

In addition, one has
\begin{equation}\label{2:12z}
\|\wh{\BQ'}_\e^{-1}-{\BH'_\e}^{-1}\|\to 0,\qquad\e\to 0.
\end{equation}
Indeed, by Hilbert's resolvent formula,
\[\wh{\BQ'}_\e^{-1}-{\BH'_\e}^{-1}=-\wh{\BQ'}_\e^{-1}(V_\e(t)-q(t))
{\BH'_\e}^{-1}.\] Here $\| \wh{\BQ'}_\e^{-1}\|,\ \|
\wh{\BH'}_\e^{-1}\|\le C$ for all $\e<1$ (this follows from
\eqref{2:12}), and by \eqref{2:11} the norm of the multiplication
operator is smaller than an arbitrary $\y$, provided that $\e$ is
small.

Theorem \ref{1:t1} (that is, eq. \eqref{1:9}) immediately follows
from \eqref{2:12y}, \eqref{2:12p}, and \eqref{2:12z}.

\section{Eigenfunction convergence}\label{sec5new}
\subsection{}
We start from some elementary remarks from the Hilbert space theory. 
Let $e,f$ be normalized elements of a Hilbert space $\CH$, and
\begin{equation}\label{5:0}
 \BK=(\cdot,e)e-(\cdot,f)f.
\end{equation}
A direct calculation shows that the Hilbert-Schmidt norm of the operator $\BK$
is given by

\[ \|\BK\|_{HS}^2=2(1-|(e,f)|^2).\]
\vskip0.2cm

Suppose now that $(e,f)$ is real, then $\|e\pm f\|^2=2(1\pm(e,f))$
and hence,
\begin{equation}\label{5:a}
 \min(\|e- f\|,\|e+ f\|)\le\left(\|e- f\|\|e+ f\|\right)^{1/2}=
\sqrt2\, \|\BK\|_{HS}.
\end{equation}

Further, let $\CH$ be decomposed into an orthogonal sum of two
subspaces,
\[ \CH=\CH_0\oplus\CH_0^\perp\]
and let $f\in\CH_0$.  Then, along with \eqref{5:a}, one has
\begin{equation}\label{5:b}
\min(\|\BP_0e-f\|,\|\BP_0e+f\|)\le\sqrt2\, \|\BK\|_{HS} 
\end{equation}
where $\BP_0$ is the operator of orthogonal 
projection onto $\CH_0$.

\vskip0.2cm

Suppose now that $\BS,\BT$ are two self-adjoint operators in $\CH$. We assume 
that they are bounded, though this is actually not needed. 
Suppose that, on some interval $\d\in\R$, each operator has exactly one 
point of
spectrum, and this point is a simple eigenvalue. Say, $\l_0, \mu_0$ are these 
eigenvalues for $\BS,\BT$ respectively, and $e,f$ are the corresponding 
normalized  eigenvectors.
Let $\phi(\l)$ be a smooth, real-valued function on $\R$, which vanishes 
outside $\d$ and is such that 
\[ \phi(\l_0)=\phi(\mu_0)=1.\]
Then we conclude from Spectral Theorem that
\[ \phi(\BS)=(\cdot,e)e;\qquad \phi(\BT)=(\cdot,f)f,\]
 and therefore the operator $\BK$  can be represented as
\[ \BK=\phi(\BS)-\phi(\BT).\]
This representation allows us to apply the theory of double operator
integrals (see \cite{BS1}, and especially section 8 therein.) In
particular, we conclude from theorems 8.1 and 8.3 that
\[ \|\BK\|\le C\|\BS-\BT\|,\qquad C=C(\phi).\]
Since $\rank \BK\le 2$, we conclude that also
\begin{equation}\label{5:m}
\|\BK\|_{HS}\le\sqrt2\,\|\BK\|\le C\sqrt2\,\|\BS-\BT\|.
 \end{equation}
\subsection{}
Now we proceed to proving \eqref{1:d} and \eqref{1:e}. 
We start from \eqref{1:e}. 
Then we take $\CH=L^2(\R),\ \CH_0=L^2(I_\e)$. Recall that we identify 
$L^2(I_\e)$ with the subspace in $L^2(\R)$ formed by functions vanishing
outside $I_\e$. The operator $\BP_0$ acts as the restriction operator from 
$\R$ to the interval $I_\e$.

We apply the estimate \eqref{5:m} to the operators
$\BS=\BH^{-1}$ and $\BT=\wh\BQ_\e^{-1}\oplus\bsymb0$. Theorem 1.3 guarantees
that for each $j\in\N$ there exists a number $\e_j^*$, such that for any
$\e<\e_j^*$ there is a neighborhood $\d$ of the point $\mu_j^{-1}$, in which
the spectrum of $\wh\BQ_\e^{-1}$ reduces to a single and simple eigenvalue. By 
\eqref{1:8x}, this eigenvalue is $\e^{-2\a}\l_j^{-1}(\BQ_\e)$.

The eigenfunction of $\BS$ which corresponds to the eigenvalue $\mu_j^{-1}$
is $X_j(t)$, and that of $\BT$ which corresponds to the eigenvalue 
$\e^{-2\a}\l_j^{-1}(\BQ_\e)$ is equal to 
$\e^{\a/2}\wt\Psi_{j,\e}(t\e^{\a})$ on
$I$ and vanishes outside $I$. Under the appropriate choice of the sign of
$\wt\Psi_{j,\e}(x)$, we conclude from \eqref{5:m} that 
\[\int_{I_\e}\left|X_j(t)-\e^{\a/2}\wt\Psi_{j,\e}(t\e^{\a})\right|^2dt\to 0.\]
Using the substitution $t=x\e^{-\a}$, we get \eqref{1:e}. 
  
\vskip0.2cm
To get \eqref{1:d}, we apply the estimate \eqref{5:m}
to the operators 
\[\BS=\BA_\e^{-1}=(\D_\e-\frac{\pi^2}{M^2\e^2}\BI_\e)^{-1},
\qquad \BT=\BQ_\e^{-1}\oplus\bsymb0.\]
Here $\CH=L^2(\Om_\e),\ \CH_0=\CL_\e$, and $\BP_0$ is the operator $\BP_\e$
described in \eqref{3:op}. Eigenvalues of $\BS$ are 
$\l_j(\BS)=(\l_j(\e)-\frac{\pi^2}{M^2\e^2})^{-1}$, and the corresponding
eigenfunctions are $\Psi_{j,\e}(x,y)$. Recall that $\l_j(\e)$
is our notation for $\l_j(\D_\e)$. 
By theorem 1.2, for each $j\in\N$ there exists an
$\e^*_j>0$, such that for 
$\e<\e^*_j$ the point $\l_j(\BS)$ has a neighborhood $\d\subset\R$
containing exactly one eigenvalue of the operator $\BT$. This eigenvalue
is simple and necesarily coincides with $\l_j(\BQ_\e)$. The corresponding
eigenfunction is $\psi_\chi(x,y)$ with $\chi(x)=\wt\Psi_{j,\e}(x)$. 
Now, the inequality \eqref{5:m} turns into \eqref{1:d}.

 Note that 
the interval $\d$ appearing in this argument  is quite narrow for large 
values of $j$.
Indeed, its length can not exceed the number 
\[\left(\l_{j-1}(\e)-\frac{\pi^2}{M^2\e^2}\right)^{-1}-
\left(\l_{j+1}(\e)-\frac{\pi^2}{M^2\e^2}\right)^{-1}.\]
This results in a very large
constant $C=C_j$ in the corresponding inequality \eqref{1:d}.

\section{Remarks on possible extensions}\label{rem}

{\bf 1.} The result of theorem \ref{1:t0} extends to the case when $h(x)$ is 
continuous on
$I$, positive inside $I$, satisfies the condition ({\it i}),
and in a neighborhood of $x=0$ admits the expansion \eqref{1:1};
however, the function $h(x)$ is allowed to vanish at endpoints of $I$. 
A simple but 
important example of such a function is $h(t)=1-|x|$ on the segment $I=[-1,1]$.

This statement is easy to justify by using the variatonal principle in 
its simplest form. 
Namely, we construct functions $h^\pm$ in such a way that $h^+$ satisfies 
the conditions
({\it i}) and ({\it ii}) on $I$, with the same coefficients in the expansion 
\eqref{1:1},  and the inequality $h(x)\le h^+(x)$, while $h^-$ satisfies the 
conditions ({\it i}) and ({\it ii}) on a smaller segment $\wt I\subset I$, 
also with the same coefficients in
\eqref{1:1},  and the inequality $h(x)\ge h^-(x),\ x\in \wt I$. Denote
\begin{gather*}
\Om^+_\epsilon=\{(x,y): x\in I,\ 0< y<\e h^+(x)\};\\
\Om^-_\epsilon=\{(x,y): x\in \wt I,\ 0< y<\e h^-(x)\}.
\end{gather*}
Let $\l_j^{\pm}(\e)$ stand for the eigenvalues of the Dirichlet Laplacian
in $\Om^\pm_\epsilon$, then by the variatonal principle we have
\[ \l_j^+(\e)\le \l_j(\e)\le \l_j^-(\e).\]
By theorem \ref{1:t0}, the equality \eqref{1:s1} 
is valid for  $\l_j^{\pm}(\e)$; therefore 
it holds also for $\l_j(\e)$.

It remains to construct the functions $h^\pm(x)$. Let $x>0$. 
It follows from \eqref{1:1} that on some
segment $[0,\y]$ we have 
\[ |M-h(x)-c_+x^m|\le Kx^{m+1},\]
with some constant $K>0$. The function $h^+(x)$ can be obtained (for $x>0$)
as an appropriate extension of $M-c_+x^m+Kx^{m+1}$ to $[0,b]$, and
 $h^-(x)$ can be obtained as the restriction of $M-c_+x^m-Kx^{m+1}$
to a segment $[0,\wt\y]$, where $\wt\y\le\y$ is small enough, to guarantee 
$h^-(x)>0$ on $[0,\wt\y]$. For $x<0$, we construct $h^\pm(x)$ in a
similar way.
\vskip0.2cm

At the moment it is unclear to the authors, whether theorems \ref{1:t2} and 
\ref{1:t1} also
extend to the case when $h(x)$ vanishes at the endpoints of the
interval $I$. The main technical obstacle comes from the fact
that the function $\sin\frac{\pi y}{\e h(x)}$ oscillates very fast near the
points where $h(x)$ vanishes.

\vskip0.2cm
{\bf 2.} The results of all three theorems \ref{1:t0} -- \ref{1:t1} 
extend to the case when
the Dirichlet conditions at $x=-a, x=b$ are replaced by the Neumann conditions.
The argument is basically the same as for the Dirichlet problem, except for two
important points: the proofs of lemma \ref{1:ld} and of 
proposition \ref{2:pr} do not apply 
to the Neumann case.

These obstacles can be overcome, so that the results survive under
the same assumptions ({\it i}), ({\it ii}) on the function $h(x)$.
\vskip0.2cm
{\bf 3.} Theorems \ref{1:t2} and \ref{1:t1} extend to the 
case of an infinite strip, when $I$ is the whole line, or a half-line.
Let, say, $I=\R$. One has to impose additional conditions on
the behavior of $h(x)$ as $|x|\to\infty$. 
A simple condition is

({\it iii}) The function $h(x)$ is such that
\[ \limsup\limits_{|x|\to\infty}h(x)< M;\qquad 
\frac{h'}{h}\in L^\infty(\R).\]

Theorem \ref{1:t4}  below, which is an analogue of theorem \ref{1:t0}, 
looks a little bit more 
complicated than the latter, since  
the spectrum of the Dirichlet Laplacian $\D_\e$ in $\Om_\e$ is now
not necessarily discrete. In the formulation of the theorem $\nu(\e)$
stands for the bottom of of the essential spectrum of $\D_\e$, and we take
$\nu(\e)=\infty$ if the spectrum of $\D_\e$ is discrete. We denote
by $n_-(\e),\ n_-(\e)\le\infty$, the number of eigenvalues $\l_j(\e)<\nu(\e)$.

\begin{thm}\label{1:t4}
If $h(x)$ satisfies the conditions ({\it i}), ({\it ii}) and ({\it iii}),
then for $\e$ small the spectrum of $\D_\e$ below $\nu(\e)$ is non-empty
and $n_-(\e)\to\infty$ as $\e\to 0$. For each $j\in\N$ the equality 
\eqref{1:s1}
holds, where again, $\mu_j$ are eigenvalues of the operator \eqref{1:s2}.
\end{thm}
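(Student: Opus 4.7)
The plan is to sandwich $\D_\e$ between two finite-interval operators to which \thmref{1:t0} (and its Neumann analog from Remark~2 above) directly applies. I would first isolate the essential spectrum. Under condition~(iii), set $\tilde M=\limsup_{|x|\to\infty}h(x)<M$; for any small $\y>0$ there is $R_0>0$ with $h(x)\le\tilde M+\y$ on $|x|\ge R_0$. Persson's characterisation of the essential spectrum combined with the transverse Friedrichs inequality $\int_0^{\e h(x)}|\p_y u|^2\,dy\ge (\pi/(\e h(x)))^2\int_0^{\e h(x)}u^2\,dy$ would yield
\[\nu(\e)\ge \frac{\pi^2}{(\tilde M+\y)^2\e^2}=\frac{\pi^2}{M^2\e^2}+\frac{c_0}{\e^2},\qquad c_0=c_0(M,\tilde M,\y)>0.\]
Since $\a=2/(m+2)<1$, $\e^{-2}$ dominates $\e^{-2\a}$, so any value of the form $\pi^2/(M^2\e^2)+O(\e^{-2\a})$ sits strictly below $\nu(\e)$ for small $\e$.

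Next, for fixed $R\ge R_0$, I would bracket $\D_\e$ at $x=\pm R$. Adding Dirichlet conditions at $x=\pm R$ decouples $\Om_\e$ into a central piece $\Om_\e^c=\Om_\e\cap\{|x|<R\}$ and two exterior half-strips, \emph{raising} eigenvalues; adding Neumann conditions decouples similarly and \emph{lowers} them, yielding $\l_j(\D_\e^N)\le\l_j(\e)\le\l_j(\D_\e^D)$. On each exterior half-strip the transverse Friedrichs inequality again gives a spectral floor $\pi^2/((\tilde M+\y)\e)^2$, well above $\pi^2/(M^2\e^2)+C\e^{-2\a}$. Hence, for each fixed $j$ and $\e$ small, the first $j$ eigenvalues of both $\D_\e^D$ and $\D_\e^N$ are contributed entirely by the central piece $\Om_\e^c$. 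On $[-R,R]$ the function $h$ satisfies (i) and (ii) with the \emph{same} expansion coefficients $M$, $c_\pm$, $m$ at $x=0$. Applying \thmref{1:t0} to the Dirichlet central piece and its Neumann analog from Remark~2 to the Neumann central piece, I obtain
\[\e^{2\a}\bigl(\l_j(\D_\e^{D/N})-\pi^2/(M^2\e^2)\bigr)\to\mu_j,\]
with the \emph{same} limit $\mu_j$ on both sides, since the limit operator $\BH$ of \eqref{1:s2} acts on $L^2(\R)$ and is insensitive to the choice of $R$. Sandwiching gives \eqref{1:s1}. Because the upper bound sits strictly below $\nu(\e)$ for small $\e$, each $\l_j(\e)$ is a genuine discrete eigenvalue, whence $n_-(\e)\to\infty$ as $\e\to 0$.

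The key point requiring care is that the Dirichlet and Neumann brackets pinch to the \emph{same} limit. This is precisely the content of Remark~2 above: the proofs of \thmref{1:t2} and \thmref{1:t1} survive with Neumann endpoint conditions because, under the rescaling $t=x\e^{-\a}$, the endpoints $\pm R\e^{-\a}$ recede to infinity in the rescaled variable, where the eigenfunctions of $\BH$ decay rapidly (Agmon decay driven by $q(x)\sim|x|^m$), so the Dirichlet/Neumann distinction there is invisible in the limit. A secondary technical worry is that condition~(iii) permits $h(x)\to 0$ as $|x|\to\infty$, in which case $\wt h=h'/h^2$ need not be bounded and the off-diagonal form estimate \eqref{2:3} could fail in an infinite-strip version of \thmref{1:t2}; the bracketing approach bypasses this entirely by localising the whole argument to $[-R,R]$, where $h$ is bounded away from zero.
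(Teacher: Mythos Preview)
Your bracketing argument is sound and, granting Remark~2 as an input, correct. It is, however, a genuinely different route from the one the paper indicates. The paper does not give a full proof of Theorem~\ref{1:t4}; it only records that the intended argument extends the operator-theoretic machinery of Theorems~\ref{1:t2} and~\ref{1:t1} directly to the infinite strip, the chief obstacle being that the resolvents need no longer be compact, so that Simon's dominated-convergence theorem (Theorem~2.16 in \cite{S}) used in Proposition~\ref{2:pr} fails and a substitute must be supplied. Your approach sidesteps this completely: by cutting at $x=\pm R$ and invoking the finite-interval results on both the Dirichlet and the Neumann side, you never face a non-compact resolvent, and you rightly note that condition~(iii) allows $h(x)\to 0$ at infinity, which would make the off-diagonal estimate~\eqref{2:3} delicate in a direct infinite-strip version of Theorem~\ref{1:t2}. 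The price is that you obtain only eigenvalue convergence~\eqref{1:s1}, not the stronger norm-resolvent statements analogous to Theorems~\ref{1:t2} and~\ref{1:t1}; the paper's route, once completed, would also deliver eigenfunction information of the type \eqref{1:d}--\eqref{1:e} on the whole line. Finally, observe that your reduction leans on Remark~2 (the Neumann-endpoint analogue of Theorem~\ref{1:t0}), whose proof the paper likewise defers; this is a legitimate input here, but it means you have traded one deferred statement for another, albeit a conceptually simpler one on a bounded interval.
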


Proof of an analogue of theorem \ref{1:t1} turns out to be 
the crucial step in the analysis of the case $I=\R$. The main difficulty here
is that theorem 2.16 in \cite{S} does not apply, since the operators involved
may be non-compact. However, an appropriate substitute can be proved, and this
leads to the desired result.

A detailed exposition of the material related to remarks 2 and 3 will 
be given in a
forthcoming paper.

University of Arizona, Tucson, Arizona, USA

e-mail address: friedlan@math.arizona.edu
\vskip0.1cm
Weizmann Institute of Science, Rehovot, Israel

e-mail address: michail.solomyak@weizmann.ac.il
\end{document}